\documentclass[12pt,a4paper]{article}


\usepackage[active]{srcltx}
\usepackage{amsfonts}
\usepackage{verbatim}
\usepackage{amsmath}
\usepackage{amsbsy}
\usepackage{amsxtra}
\usepackage{latexsym}
\usepackage{amssymb}
\usepackage{url}
\usepackage{cite}
\usepackage{pstricks,pst-node}
\usepackage{enumerate}
\usepackage{mathrsfs}

\usepackage[margin=2cm,nohead]{geometry}


\newtheorem{theorem}{Theorem}
\newtheorem{lemma}{Lemma}

\newtheorem{proposition}{Proposition}
\newtheorem{remark}{Remark}

\DeclareMathOperator{\cone}{cone}

\DeclareMathOperator{\argmin}{argmin}

\newcommand{\Hy}{\mathcal H}
\newcommand{\lng}{\lf\langle}
\newcommand{\rng}{\rg\rangle}
\newcommand{\lf}{\left}
\newcommand{\rg}{\right}

\newcommand{\R}{\mathbb R}
\newcommand{\V}{\mathbb V}
\newcommand{\N}{\mathbb N}
\newcommand{\bs}{\begin{smallmatrix}}
\newcommand{\es}{\end{smallmatrix}}

\newcommand{\f}{\frac}
\newcommand{\ds}{\displaystyle}

\newenvironment{proof}{{\noindent\bf Proof.}}{\hfill$\Box$\\}

\begin{document}

\title{Extended Lorentz cones and mixed complementarity problems
\thanks{{\it 2010 AMS Subject Classification:} 90C33, 47H07, 47H99, 47H09. {\it Key words and phrases:} isotone projections, 
closed convex cones, complementarity problems, mixed complementarity problems, Picard iteration, fixed point}
}
\author{S. Z. N\'emeth\\School of Mathematics, The University of Birmingham\\The Watson Building, Edgbaston\\Birmingham B15 2TT, United Kingdom\\email: s.nemeth@bham.ac.uk\and G. Zhang\\School of Mathematics, The University of Birmingham\\The Watson Building, Edgbaston\\Birmingham B15 2TT, United Kingdom\\email: gxz245@bham.ac.uk}
\date{}
\maketitle

\begin{abstract}
	In this paper we extend the notion of a Lorentz cone in a Euclidean space as follows: We divide the index set corresponding to the
	coordinates of points in two disjoint classes. By definition a point belongs to an extended Lorentz cone associated with this division, 
	if the coordinates corresponding to one class are at least as large as the norm of the vector formed by the coordinates corresponding to
	the other class. We call a closed convex set isotone projection set with respect to a pointed closed convex cone if the projection onto 
	the set is isotone (i.e., order preserving) with respect to the partial order defined by the cone. We determine the isotone projection sets with respect 
	to an extended Lorentz cone. In particular, a Cartesian product between an Euclidean space and any closed convex set in another Euclidean 
	space is such a set. We use this property to find solutions of general mixed complementarity problems recursively.
\end{abstract}

\section{Introduction}

If $K\subset\R^m$ is a closed convex cone, $K^*$ the dual cone of $K$, and $ F:\,K\to\R^m$ a mapping, then the nonlinear complementarity problem
$NCP(F,K)$ defined by $K$ and 
$ f$ is the problem of finding an $ x^*\in K$ such that $ F( x^*)\in K^*$ and $\langle x^*, F( x^*)\rangle=0$. 
Some problems of economics, physics and engineering can be modelled by complementarity problems and they occur in constraint qualifications for mathematical 
programming too\cite{FacchineiPang2003}. It is known that $ x^*$ is a solution of the nonlinear complementarity problem $NCP(F,K)$ if and only 
if $ x^*$ is a fixed point of the mapping $K\ni x\mapsto  P_K(x-F(x))$, where $ P_K$ is the
projection mapping onto $K$\cite{FacchineiPang2003}. Therefore, if the sequence $\{ x^n\}_{n\in\mathbb N}$ of the Picard iteration
\begin{equation}\label{rec}
	 x^{n+1}= P_K( x^n-F(x^n)),
\end{equation} 
is convergent to $x^*\in K$ and the mapping $ F$ is continuous, then a simple limiting process in \eqref{rec} yields that $ x^*$ is a fixed point of 
the mapping $K\ni x\mapsto P_K(x-F(x))$, or equivalently a solution of the nonlinear complementarity problem defined by $K$ and 
$F$ (see Proposition 1.5.8\cite{FacchineiPang2003}).  Therefore, several papers dealt with conditions of convergence for recursions similar to \eqref{rec}, as for example 
\cite{Auslender1976,Bertsekas1989,Iusem1997,Khobotov1987,Korpelevich1976,Marcotte1991,Nagurney1993,Sibony1970,Solodov1999,Solodov1996,Sun1996}. However, neither of these 
works used the partial ordering defined by a cone for showing the convergence of the corresponding iterative scheme. Instead, they used as a tool the Banach fixed point theorem 
and assumed Kachurovskii-Minty-Browder type monotonicity (see\cite{Kachurovskii1960,Minty1962,Minty1963,Browder1964}) and global Lipschitz
properties of $F$. We will provide conditions for the convergence of \eqref{rec} in $\R^p\times\R^q$ ($p,q>0$) in terms of the partial order 
defined by the extended Lorentz cone \eqref{extlor}, when $K=\R^p\times C$, where $C$ is a general closed convex cone in $\R^q$. Although also based on the idea of isotonicity of the projection, our results are for a much wider family of cones than the isotone projection cones considered in 
\cite{IsacNemeth1990b,IsacNemeth1990c,Nemeth2009} (nevertheless 
we acknowledge that the class of isotone projection cones contain cones important from the practical point of view, such as the monotone 
cone\cite{GuyaderJegouNemeth2012} and the monotone nonnegative cone\cite{Dattorro2005}, which cannot be written as a direct product 
$K=\R^p\times C$ with $p>0$). The isotonicity property of a projection was also used by H. Nishimura and E. A. Ok 
\cite{NishimuraOk2012} for studying the solvability of variational inequalities and related equilibrium problems. We would like to emphasize that
the ordered vector structures are becoming more and more important in studying various fixed point and related equilibrium problems 
(see the book\cite{CarlHeikilla2011} of S. Carl and S Heikkil\"a and the references therein).

The structure of the paper is as follows: In the section ``Preliminaries'' we will recall several definitions and fix the terminology. In 
particular, we will define the notion of $K$-isotone mappings with respect to a pointed closed cone $K$. 
In section 4, we will extend the notion of Lorentz cones (also called ``second order cones'' or ``icecream cones'' in the literature) and show 
in Theorem \ref{tliso} that the projection mapping $P_K$ onto $K=\R^p\times C$, where $C$ is a closed convex set (in particular any closed 
convex cone) is $L$-isotone with respect to the extended Lorentz cone $L$ \eqref{extlor}. This isotonicity property will be crucial in Theorem 
\ref{tsumm} of section 6 (and Proposition \ref{psumm} on which this theorem is based) to generate an iteration which is convergent to a solution 
of a general mixed complementarity problem (extension of the mixed complementarity problem considered by Facchinei and Pang in 
\cite{FacchineiPang2003} from the nonnegative orthant to a general closed convex cone), without any restriction on the closed convex cone 
defining this problem. Section 5 has a transitional role from the nonlinear complementarity problems to the mixed complementarity problems,
in the sense that the isotonicity properties of section 4 will be used directly in section 5 for nonlinear complementarity problems on which 
the mixed complementarity problems of section 6 are based. In Section 7 we will give an example for Theorem \ref{tsumm}. The Appendix is of 
independent interest with the purpose of convincing the reader that the family of $K$-isotone mappings (used in the condition ``$I-F$ is 
$K$-isotone`` of Proposition \ref{psumm} and in the corresponding condition of Theorem \ref{tsumm}) is very wide, and later sections can be read 
without it.

We note that Theorem \ref{tliso} determines all sets 
$K\subset\R^p\times\R^q$ ($p,q>0$) with $P_K$ $L$-isotone (where $L$ is the extended Lorentz cone defined by \eqref{extlor}), family which 
contains the sets $K=\R^p\times C$, where $C$ is a closed convex set.
Theorem \ref{tliso} is interesting in its own way and may be useful in a wider context, for more general equilibrium problems and other problems 
where isotonicity occurs or can be used as a tool. 

\section{Preliminaries}
Denote by $\N$ the set of nonnegative integers. Let $m$ be a positive integer. Identify $\R^m$ with the set of column vectors with real components.  The canonical scalar 
product in $\R^m$ is defined by 
$\lng x,y\rng=x^\top y$, for any $x,y\in\R^m$. Let $\|\cdot\|$ be the norm corresponding to the scalar product $\lng\cdot,\cdot\rng$, that is, $\|x\|=\sqrt{\lng x,x\rng}$, 
for any $x\in\R^m$. 

For any $m$ positive integer denote\[\R^m_+=\{x=(x_1,\dots,x_m)^\top\in\R^m:x_1\ge0,\dots,x_m\ge0\}\] and call it the \emph{nonnegative orthant} of $\R^m$. Let $p,q$ 
positive integers. Define the Cartesian product $\R^p\times\R^q$ as the pair of vectors $(x,u)$, where $x\in\R^p$ and $u\in\R^q$. Any vector $(x,u)\in\R^p\times\R^q$ 
can be identified with the vector $\lf(x^\top,y^\top\rg)^\top\in\R^{p+q}$. The scalar product in $\R^p\times\R^q$ is given by
\[\lng(x,u),(y,v)\rng=\lng x,y\rng+\lng u,v\rng.\] 

The \emph{affine hyperplane} with the normal $u\in\R^m\setminus\{0\}$ and through $a\in\R^m$ is the set defined by
\begin{equation}\label{hyperplane}
	\Hy(u,a)=\{x\in \R^m:\;\lng x-a,u\rng=0\}.
\end{equation}
An affine hyperplane $\Hy(u,a)$ determines two \emph{closed halfspaces} $\Hy_-(a,u)$ and $\Hy_+(u,a)$  of $\R^m$, defined by
\[\Hy_-(u,a)=\{x\in \R^m:\lng x-a,u\rng\le0\},\]
and
\[\Hy_+(u,a)=\{x\in \R^m:\lng x-a,u\rng\ge0\}.\]
An \emph{affine hyperplane} through the origin will be simply called \emph{hyperplane}.

Let $\V$ be a real vector space. A set $K\subset\V$ is called a \emph{convex cone} if it is invariant with respect to the linear structure of
$\V$, that is, $\alpha x+\beta y\in K$, whenever $x,y\in K$ and $\alpha,\beta\ge0$. It is easy to show that every convex cone is a convex set. 

A convex cone $K\subset\R^m$ which is a closed set is called a closed convex cone. A closed convex cone $K\subset\R^m$ is called \emph{pointed} 
if $K\cap(-K)=\{0\}$, where $0$ is the origin of $\R^m$, that is, the vector with all entries zero. 

Let  $K\subset\R^m$ be a closed convex cone. Then, the set\[K^*=\{x\in\R^m:\lng x,y\rng\ge0,\textrm{ }\forall y\in K\}\] is called the \emph{dual} of $K$ and it is easy to 
see that it is a closed convex cone. It is known that $(K^*)^*=K$. The closed convex cone $K$ is called \emph{subdual} if $K\subset K^*$ and
\emph{self-dual} if $K=K^*$.

Let $K\subset\R^m$ be a pointed closed convex cone. Denote $\le_K$ the partial order relation defined by $x\le_K y\iff y-x\in K$ and call it 
\emph{the partial order relation defined by $K$}. 
The relation $\le_K$ is reflexive, transitive, antisymmetric and compatible with the linear structure of $\R^m$ in the sense that $x\le_K y$ implies that 
$tx+z\le_K ty+z$, for any $z\in\R^m$ and any $t\in\R_+$. Moreover, $\le_K$ is continuous at $0$ in the sense that if $x^n\to x$ when $n\to\infty$ and $0\le_K x^n$ for any 
$n\in\N$, then $0\le_K x$. Conversely any reflexive, transitive and antisymmetric relation $\le$ which is compatible with the linear structure of $\R^m$ and it is 
continuous at $0$ is defined by a pointed closed convex cone. More specifically, $\le=\le_K$, where $K=\{x\in\R^m:0\le x\}$ is a pointed closed convex cone.
 
For any closed convex set $C$ denote by $P_C$ the \emph{metric projection mapping onto $C$} that is the mapping defined by 
\[\R^m\ni x\mapsto P_Cx=\argmin\{\|y-x\|:y\in C\}.\] It can be shown (see \cite{Zarantonello1971}) that $P_C$ is a well defined point to point mapping from $\R^m$ to 
$\R^m$. From the definition above it easily follows that 
\begin{equation}\label{protr}
	P_{y+C}x=y+P_C(x-y)
\end{equation}
for any $x,y\in\R^m$. It is also known that $P_C$ is nonexpansive (see \cite{Zarantonello1971}), that is, 
\begin{equation}\label{pronexp}
	\|P_C(x)-P_C(y)\|\le\|x-y\|, 
\end{equation}
for any $x,y\in\R^m$.

Let $K\subset\R^m$ be a pointed closed convex cone. The  mapping $F:\R^m\to\R^m$ is called \emph{$K$-isotone} if $x\le_K y$ implies 
$F(x)\le_K F(y)$. 

The nonempty closed convex set 
$C\subseteq\R^m$ is called $K$-isotone projection set if $P_C$  is $K$-isotone.  

The set $\Omega\subset\R^m$ is called $K$-bounded from below ($K$-bounded from above) if there exists a vector $y\in\R^m$ such that $y\le_K x$ ($x\le_K y$), for all 
$x\in\Omega$. In this case $y$ is called a lower $K$-bound (upper $K$-bound) of $\Omega$.  
If $y\in\Omega$, then $y$ is called the $K$-least element ($K$-greatest element) of $\Omega$.

Let $\mathcal I\subset\N$ be an unbounded set of nonnegative integers. The sequence $\{x^n\}_{n\in\mathcal I}$ is called $K$-increasing ($K$-decreasing) if 
$x^{n_1}\le_K x^{n_2}$ ($x^{n_2}\le_K x^{n_1}$), whenever $n_1\le n_2$.  

The sequence $\{x^n\}_{n\in\mathcal I}$ is called $K$-bounded from below ($K$-bounded from above) if the set $\{x^n:n\in\mathcal I\}$ is $K$-bounded from below 
($K$-bounded from above).

A closed convex cone $K$ is called regular if any $K$-increasing sequence which is $K$-bounded from above is convergent. It is easy to show that this is equivalent to the 
convergence of any $K$-decreasing sequence which is $K$-bounded from below. It is known (see \cite{McArthur1970}) that any pointed closed convex cone in $\R^m$ is regular.

\section{Isotonicity of the projection with respect to extended Lorentz cones}

For $a,b\in\R^m$ denote $a\ge b$ if all components of $a$ are at least as large as the 
corresponding components of $b$, or equivalently $b\le_{\R^m_+} a$. Let $p,q$ be positive integers.  Denote by $e\in\R^p$ the vector whose all 
components are $1$. Let 
\begin{equation}\label{extlor} 
	L=\{(x,u)\in\R^p\times\R^q:x\ge\|u\|e\}
\end{equation} 
and 
\begin{equation}\label{extld}
	M=\{(x,u)\in\R^p\times\R^q:\lng x,e\rng\ge\|u\|,x\ge0\}.
\end{equation}
\begin{proposition}\label{pd} 
	$M=L^*$.
\end{proposition}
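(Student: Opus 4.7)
The plan is to prove both inclusions $M \subseteq L^*$ and $L^* \subseteq M$ directly from the definitions, using Cauchy--Schwarz for one direction and carefully chosen test vectors for the other.

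For $M \subseteq L^*$, I would pick arbitrary $(x,u) \in M$ and $(y,v) \in L$ and bound the pairing $\lng(x,u),(y,v)\rng = \lng x,y\rng + \lng u,v\rng$ from below. Since $y \ge \|v\|e$ componentwise and $x \ge 0$, multiplying gives $\lng x,y\rng \ge \|v\|\lng x,e\rng$. Combining $\lng x,e\rng \ge \|u\|$ (from membership in $M$) with the Cauchy--Schwarz lower bound $\lng u,v\rng \ge -\|u\|\|v\|$ then yields $\lng x,y\rng + \lng u,v\rng \ge \|v\|\|u\| - \|u\|\|v\| = 0$, as desired.

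For the reverse inclusion $L^* \subseteq M$, I would probe an arbitrary $(x,u) \in L^*$ with specific elements of $L$. To extract $x \ge 0$, I would take test vectors $(te_j, 0) \in L$ for $t > 0$ and each coordinate index $j$ (these lie in $L$ because $te_j \ge 0 = \|0\|e$ is false in general, so actually one should use $y = e + te_j$ or, more simply, $(te, 0)$ together with $(te + se_j, 0)$ and subtract --- I will choose whichever variant gives the cleanest argument, noting that $(e + t e_j, 0) \in L$ for all $t \ge 0$ and the defining inequality for $L^*$ then forces $\lng x, e + te_j\rng \ge 0$ for every $t \ge 0$, whence $x_j \ge 0$ by letting $t \to \infty$). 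To extract $\lng x,e\rng \ge \|u\|$, the key test vector is $(\|u\|e, -u)$, which belongs to $L$ since $\|u\|e \ge \|-u\|e$ with equality; the inequality $\lng x, \|u\|e\rng + \lng u, -u\rng \ge 0$ simplifies to $\|u\|(\lng x,e\rng - \|u\|) \ge 0$, giving the required bound when $\|u\| > 0$ and being vacuous when $u = 0$.

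I do not foresee any real obstacle here; the proof is essentially two short self-contained computations. The only step that requires a moment of thought is selecting the test vector $(\|u\|e, -u)$ that saturates the boundary of $L$ and lets Cauchy--Schwarz be used with equality.
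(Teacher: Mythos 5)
Your proposal is correct and follows essentially the same route as the paper: Cauchy--Schwarz for $M\subseteq L^*$ and explicit test vectors in $L$ for $L^*\subseteq M$, with your choice $(\|u\|e,-u)$ being just a rescaling of the paper's $(e,-u/\|u\|)$ (and in fact handling the case $u=0$ more cleanly). Your hesitation about the first family of test vectors is unfounded: $(e^j,0)\in L$ because $e^j\ge 0=\|0\|e$ componentwise, and this is exactly the vector the paper pairs against $(x,u)$ to obtain $x_j\ge0$ directly, without any limiting argument.
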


\begin{proof}
	Let $(x,u)\in L$ and $(y,v)\in M$ be arbitrary. Then, by using the Cauchy-Schwarz inequality, we get
	\begin{eqnarray*}
		\lng (x,u),(y,v)\rng=\lng x,y\rng+\lng u,v\rng\ge\lng\|u\|e,y\rng
		+\lng u,v\rng\\=\|u\|\lng e,y\rng+\lng u,v\rng\ge\|u\|\|v\|+\lng u,v\rng\ge0.
	\end{eqnarray*}
	Hence, $M\subset L^*$. Conversely, let $(x,u)\in L^*$ be arbitrary. We have 
	$(e^i,0)\in L$. Hence $0\le\lng (x,u),(e^i,0)\rng=\lng x,e^i\rng+\lng u,0\rng=x_i$.
	Thus, $x\ge0$. We also have $(e,-u/\|u\|)\in L$. Hence 
	$0\le\lng (x,u),(e,-u/\|u\|)\rng=\lng x,e\rng-\|u\|$. Thus, $\lng x,e\rng\ge\|u\|$.
	Therefore, $(x,u)\in M$ which implies $L^*\subset M$. 
\end{proof}

\begin{remark}\label{r1}
	The extended Lorentz cone $L$ defined by \eqref{extlor} is a pointed closed convex (and hence regular) cone. The cone $L$ 
	(or $L^*$ \cite{Rockafellar1970}) is a polyhedral cone if and only if $q=1$. If $q=1$, then the minimal number of 
	generators of $L$ is 
	$(p+2)(1-\delta_{p1})+2\delta_{p1}$, where $\delta$ denotes the Kronecker symbol. If $q=1$, $p=1$, then a minimal set of generators of $L$ is $\{(1,1),(1,-1)\}$, 
	and if $q=1$, $p>1$, then a minimal set of 
	generators of $L$ is $\{(e,1),(e,-1),(e^i,0):i=1,\dots,p\}$. If $q=1$, then $L^*$ is a $p+1$ dimensional polyhedral cone with the minimal number of 
	generators $2p$ and a minimal set of generators of $L^*$ is $\{(e^i,1),(e^i,-1):i=1,\dots,p\}$. 
	If $q=1$ and $p>1$, then note that the number of generators of $L$ and $L^*$ coincide if and only if they are $2$ or $3$-dimensional cones. The cone $L$ is a 
	subdual cone and $L$ is self-dual if and only if $p=1$, that is, $L$ is the $q+1$-dimensional Lorentz cone. $L$ is a self-dual polyhedral cone if and only if 
	$p=q=1$.
\end{remark}

	We will prove only the subduality of $L$ and the condition for its self-duality, because the other assertions are easy to verify. Let $(x,u)\in L$. It is easy to 
	see that $x\ge 0$. Equation
	\eqref{extlor} multiplied scalarly by $e$ gives $\lng x,e\rng\ge p\|u\|\ge\|u\|$, which implies that $(x,u)\in M$, where $M$ is the cone given by \eqref{extld}. 
	Hence, by Proposition \ref{pd}, it follows that $(x,u)\in L^*$. In conclusion, $L$ is subdual. If $p=1$, then $L$ is the $q+1$, dimensional Lorentz cone and hence
	it is self-dual. Suppose that $p>1$. Let $u\in\R^q$ such that $1<\|u\|<p$. Then, Proposition \ref{pd} and equation \eqref{extld} implies that $(e,u)\in L^*$.
	On the other hand, equation \eqref{extlor} shows that $(e,u)\notin L$. Hence, $L$ is self-dual if and only if $p=1$. 

Consider $L$ defined by \eqref{extlor}. It is easy to see that $L$ is a pointed closed convex cone. Due to the fact that for $L$ is the $q+1$-dimensional Lorentz 
cone for $p=1$ (see Remark \ref{r1}), we will call $L$ the \emph{extended Lorentz cone}. 


Recall that an affine hyperplane $\Hy$ is called tangent to a closed convex set $C\subset\R^m$ at a point
$x\in C$ if it is the unique supporting affine hyperplane to $C$ at $x$ (see pages 100 and 169 of 
\cite{Rockafellar1970}).

The following result has been shown in \cite{NemethNemeth2012b}.

\begin{theorem}\label{fooo}
	The closed convex set $C\subset\R^m$ with nonempty interior is a $K$-isotone projection set
	if and only if it is of the form
	\begin{equation*}
		C=\cap_{i\in \N} \Hy_-(u^i,a^i),
	\end{equation*}
	where each affine hyperplane $\Hy(u^i,a^i)$ is tangent to $C$ and it is a $K$-isotone projection set.
\end{theorem}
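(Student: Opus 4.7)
The plan is to handle the two directions separately and to spend most of the effort on the forward (``only if'') implication. I would dispatch the easier reverse direction first.

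For the reverse direction, assume $C=\cap_{i\in\N}\Hy_-(u^i,a^i)$ with every halfspace $K$-isotone. The strategy is to realise $P_C$ as a pointwise limit of compositions of the halfspace projections $P_{\Hy_-(u^i,a^i)}$, via a von Neumann/Dykstra-style iterated projection scheme (extended to a countable intersection by the standard truncation-and-diagonal trick). The composition of finitely many $K$-isotone maps is $K$-isotone, and $K$-isotonicity is preserved under pointwise limits because $\le_K$ is continuous at $0$ (the defining property of a closed cone order recalled in the preliminaries). Hence $P_C$ inherits $K$-isotonicity.

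For the forward direction, suppose $P_C$ is $K$-isotone and $\inte C\ne\emptyset$. Standard convex analysis writes $C$ as the intersection of all its supporting halfspaces. Because $C$ has nonempty interior, smooth boundary points (those admitting a unique supporting hyperplane, hence tangent) are dense in $\bdr C$, and already this dense family of tangent halfspaces recovers $C$ as their intersection. The substantive matter is that each such tangent halfspace must itself be a $K$-isotone projection set.

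This inheritance is the main obstacle, and my approach is a scaling / directional-derivative argument. Fix a smooth boundary point $x_0$ of $C$ with tangent hyperplane $\Hy(u,x_0)$, and put $H=\Hy_-(u,x_0)$. Given $y_1\le_K y_2$, form $z_i^\varepsilon=x_0+\varepsilon(y_i-x_0)$ for small $\varepsilon>0$. Compatibility of $\le_K$ with translations and positive scalings gives $z_1^\varepsilon\le_K z_2^\varepsilon$, and $K$-isotonicity of $P_C$ yields $P_C(z_1^\varepsilon)\le_K P_C(z_2^\varepsilon)$. Rescaling back, the points $x_0+\varepsilon^{-1}(P_C(z_i^\varepsilon)-x_0)$ remain $K$-ordered. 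Since $\Hy(u,x_0)$ is the unique supporting hyperplane at $x_0$, the tangent cone $T_C(x_0)$ coincides with $H-x_0$, and the classical directional-derivative formula for metric projections gives
\[
x_0+\varepsilon^{-1}\bigl(P_C(z_i^\varepsilon)-x_0\bigr)\ \longrightarrow\ x_0+P_{T_C(x_0)}(y_i-x_0)\ =\ P_H(y_i).
\]
Passing to the limit and invoking continuity of $\le_K$ transfers the order, yielding $P_H(y_1)\le_K P_H(y_2)$. The delicate technical step here is justifying this directional-derivative identity at a smooth boundary point of a general closed convex set; once that is in hand, both directions combine to finish the proof.
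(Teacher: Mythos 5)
First, note that the paper does not actually prove Theorem \ref{fooo}; it is quoted verbatim from \cite{NemethNemeth2012b}, so there is no internal proof to compare against. Judged on its own merits, your forward (``only if'') direction is essentially sound: the blow-up $z_i^\varepsilon=x_0+\varepsilon(y_i-x_0)$ combined with Zarantonello's formula $\lim_{\varepsilon\downarrow0}\varepsilon^{-1}(P_C(x_0+\varepsilon h)-x_0)=P_{T_C(x_0)}(h)$ and the identity $T_C(x_0)=\Hy_-(u,x_0)-x_0$ at a smooth boundary point does transfer isotonicity from $P_C$ to the tangent halfspace, and the density of smooth boundary points (Mazur) plus a countable selection recovers $C$ as the intersection. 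You should still say a word about why isotonicity of the halfspace projection implies isotonicity of the boundary hyperplane's projection (the statement is about $\Hy(u^i,a^i)$, not $\Hy_-(u^i,a^i)$; this follows from $P_{\Hy}(x)=P_{\Hy_-}(x+tu)$ for $t$ large), but that is a cosmetic repair.

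The reverse direction, however, has a genuine gap. Your plan is to write $P_C$ as a pointwise limit of compositions of the maps $P_{\Hy_-(u^i,a^i)}$, but cyclic/alternating projections onto convex sets converge to \emph{some} point of the intersection, not to the nearest one, even for two halfspaces: take $H_1=\{(s,t):t\le0\}$ and $H_2=\{(s,t):s+t\le0\}$ in $\R^2$ and $z=(1,1)$; then $P_{H_2}(P_{H_1}(z))=(1/2,-1/2)\in H_1\cap H_2$ is a fixed point of the composition, whereas $P_{H_1\cap H_2}(z)=(0,0)$. Dykstra's algorithm does converge to $P_C(z)$, but its iterates are not compositions of the $P_{\Hy_-(u^i,a^i)}$ (they involve the correction increments $z^{k}-P(z^{k}+e^{k})$ fed back in), so $K$-isotonicity of the individual halfspace projections does not propagate through it; your ``pointwise limit of compositions of isotone maps'' premise is simply not available. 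A symptom of the problem is that your argument for this direction never uses the tangency hypothesis, so if it worked it would prove that \emph{any} intersection of $K$-isotone halfspaces is a $K$-isotone projection set --- a much stronger statement than the theorem, and the tangency requirement in the statement is there precisely because it is needed. A correct sufficiency proof must exploit tangency to relate $P_C$ near a boundary point to the projection onto the single tangent halfspace at that point (e.g.\ via the fact that for $z\notin C$ the supporting hyperplane at $P_Cz$ with normal $z-P_Cz$ satisfies $P_{\Hy_-}(z)=P_Cz$), which is an entirely different mechanism from iterated projections.
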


\begin{lemma}\label{ti}
	Let $K\subset\R^m$ be a closed convex cone and $\Hy\subset\R^m$ be a hyperplane 
	with a unit normal vector $a\in\R^m$. Then, $\Hy$ is a $K$-isotone projection set
	if and only if \[\lng x,y\rng\ge\lng a,x\rng\lng a,y\rng,\] for any $x\in K$ and 
	$y\in K^*$.
\end{lemma}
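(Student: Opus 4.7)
The plan is to use the explicit formula for the metric projection onto a linear hyperplane, combined with the bipolar identity $K=K^{**}$, to convert the geometric isotonicity condition into the stated scalar inequality.

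First I would observe that since $\Hy$ passes through the origin with unit normal $a$, the projection is the linear map
\begin{equation*}
P_\Hy(z)=z-\langle a,z\rangle a.
\end{equation*}
Because $P_\Hy$ is linear, $K$-isotonicity ($x\le_K y\Rightarrow P_\Hy(x)\le_K P_\Hy(y)$) is equivalent, after setting $z=y-x$, to the single invariance condition
\begin{equation*}
z\in K\ \Longrightarrow\ z-\langle a,z\rangle a\in K.
\end{equation*}

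Next I would use the bipolar identity $K=K^{**}$ (recalled in the Preliminaries): an element $w\in\R^m$ lies in $K$ if and only if $\langle w,y\rangle\ge 0$ for every $y\in K^*$. Applying this to $w=z-\langle a,z\rangle a$, the invariance condition above becomes: for every $z\in K$ and every $y\in K^*$,
\begin{equation*}
\langle z,y\rangle-\langle a,z\rangle\langle a,y\rangle\ge 0,
\end{equation*}
which is precisely the inequality in the statement (after relabelling $z$ as $x$). This chain of equivalences gives both directions simultaneously.

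There is no real obstacle here beyond making sure each equivalence is genuinely an ``if and only if.'' The one small point worth being explicit about is that $K$-isotonicity of a \emph{linear} map $T$ reduces to $T(K)\subset K$, which follows because $x\le_K y$ is the same as $y-x\in K$ and $T(y)-T(x)=T(y-x)$; this is what allows the reduction to a single variable $z$. The rest is just the routine translation via the duality $K=K^{**}$ between membership in $K$ and nonnegativity against all elements of $K^*$.
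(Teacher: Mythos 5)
Your proposal is correct and follows essentially the same route as the paper: linearity of $P_\Hy$ reduces isotonicity to the invariance condition $P_\Hy(K)\subset K$, and the bipolar identity $(K^*)^*=K$ translates that membership into the stated inequality against all $y\in K^*$. No gaps.
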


\begin{proof}
	Since $P_\Hy$ is linear, it follows that $P_\Hy$ is isotone if and only if 
	\begin{equation}\label{eih}
		P_\Hy x=x-\lng a,x\rng a\in K,
	\end{equation} 
	for any $x\in K$. By the definition of the dual cone and $(K^*)^*=K$, it follows that relation 
	\eqref{eih} is equivalent to 
	\[
	\lng x,y\rng=\lng a,x\rng\lng a,y\rng+\lng x-\lng a,x\rng a,y\rng
	\ge\lng a,x\rng\lng a,y\rng,
	\] 
	for any $x\in K$ and $y\in K^*$. 
\end{proof}

The next lemma follows easily from \eqref{protr}: 

\begin{lemma}\label{leasy}
	Let $z\in\R^m$, $K\subset\R^m$ be a closed convex cone and $C\subset\R^m$ be a nonempty closed convex set. Then, $C$ is a $K$-isotone projection set if and only
	if $C+z$ is a $K$-isotone projection set.
\end{lemma}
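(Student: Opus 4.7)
The plan is to apply the translation identity \eqref{protr} directly and exploit the translation-invariance of the partial order $\le_K$. Since $\le_K$ is compatible with the linear structure of $\R^m$, we have $x \le_K y$ if and only if $x - z \le_K y - z$ for any fixed $z \in \R^m$; this is the key elementary fact that makes the proof almost automatic.

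First I would assume $C$ is a $K$-isotone projection set and aim to show the same for $C+z$. Take arbitrary $x, y \in \R^m$ with $x \le_K y$. By \eqref{protr},
\[
P_{C+z} x = z + P_C(x - z), \qquad P_{C+z} y = z + P_C(y - z).
\]
Since $x - z \le_K y - z$, the $K$-isotonicity of $P_C$ gives $P_C(x-z) \le_K P_C(y-z)$, and adding $z$ to both sides (which preserves $\le_K$) yields $P_{C+z} x \le_K P_{C+z} y$. Hence $C+z$ is a $K$-isotone projection set.

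For the converse, I would apply exactly the same argument with $C$ replaced by $C+z$ and $z$ replaced by $-z$, noting that $(C+z) + (-z) = C$; alternatively, I would invoke the direction just proved with the roles reversed. Either way, the two implications are completely symmetric.

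I do not anticipate any real obstacle here: the only thing to check carefully is that the compatibility of $\le_K$ with addition is used in both places (once to pass from $x \le_K y$ to $x-z \le_K y-z$, and once to pass from $P_C(x-z)\le_K P_C(y-z)$ to $z+P_C(x-z) \le_K z+P_C(y-z)$), which is immediate from the definition $x \le_K y \iff y - x \in K$.
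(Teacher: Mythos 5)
Your proof is correct and is exactly the argument the paper has in mind: the paper simply remarks that the lemma ``follows easily from \eqref{protr}'', and your write-up supplies that routine verification, using \eqref{protr} together with the translation-invariance of $\le_K$ in both directions. No issues.
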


\begin{theorem}\label{tliso}
	$\,$

	\begin{enumerate}
		\item Let $K=\R^p\times C$, where $C$ is an arbitrary nonempty closed convex set in $\R^q$ and $L$ be the extended Lorentz cone defined by 
			\eqref{extlor}. Then, $K$ is an $L$-isotone projection set. 
		\item Let $p=1$, $q>1$ and $K\subset\R^p\times\R^q$ be a nonempty closed convex set. Then, $K$ is an $L$-isotone projection set if and only if 
			$K=\R^p\times C$, for some $C\subset\R^q$ nonempty closed convex set.
		\item Let $p,q>1$, and 
			\begin{equation*}
				K=\cap_{\ell\in \N} \Hy_-(\gamma^\ell,\beta^\ell)\subset\R^p\times\R^q,
			\end{equation*}
			where $\gamma^\ell=(a^\ell,u^\ell)$ is a unit vector. Then, $K$ is an $L$-isotone projection set if and only if for each $\ell$ one of the 
			following conditions hold:
			\begin{enumerate}
				\item The vector $a^\ell=0$.
				\item The vector $u^\ell=0$, and there exists $i\ne j$ such that $a^\ell_i=\sqrt2/2$, $a^\ell_j=-\sqrt2/2$ and $a^\ell_k=0$, for any 
					$k\notin\{i,j\}$.
			\end{enumerate}
	\end{enumerate}
\end{theorem}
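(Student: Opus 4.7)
The plan is to handle Part 1 directly from the product structure of the projection, and to reduce Parts 2 and 3 to a classification of $L$-isotone hyperplanes through Theorem \ref{fooo}, Lemma \ref{leasy}, and Lemma \ref{ti}.

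For Part 1, since $K = \R^p \times C$ the projection factors as $P_K(x, u) = (x, P_C u)$; so if $(x^2 - x^1, u^2 - u^1) \in L$, meaning $x^2 - x^1 \ge \|u^2 - u^1\| e$, nonexpansiveness of $P_C$ (see \eqref{pronexp}) yields $x^2 - x^1 \ge \|P_C u^2 - P_C u^1\| e$, proving $L$-isotonicity. For Parts 2 and 3, writing $K = \bigcap_\ell \Hy_-(\gamma^\ell, \beta^\ell)$ with each bounding hyperplane $L$-isotone (Theorem \ref{fooo}), translating each factor via Lemma \ref{leasy} so that the hyperplane passes through the origin, and applying Lemma \ref{ti} reduces the problem to classifying the unit normals $\gamma = (a, u) \in \R^p \times \R^q$ satisfying
\[
\langle x, y\rangle + \langle v, w\rangle \ge \bigl(\langle a, x\rangle + \langle u, v\rangle\bigr)\bigl(\langle a, y\rangle + \langle u, w\rangle\bigr)
\]
for all $(x, v) \in L$ and $(y, w) \in L^* = M$.

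Sufficiency in Part 3 splits by case. For case (a) ($a = 0$, $\|u\| = 1$), one bounds $\langle x, y\rangle \ge \|v\|\|w\|$ using $x \ge \|v\| e$, $y \ge 0$, $\langle e, y\rangle \ge \|w\|$, and combines this with the identity $\langle v, w\rangle - \langle u, v\rangle\langle u, w\rangle = \langle v - \langle u, v\rangle u,\, w - \langle u, w\rangle u\rangle$ plus Cauchy--Schwarz. For case (b) ($u = 0$, $a = \tfrac{\sqrt 2}{2}(e^i - e^j)$), after expansion and using $\langle v, w\rangle \ge -\|v\|\|w\|$, the inequality collapses to $(y_i + y_j)(x_i + x_j - 2\|v\|) \ge 0$, which is immediate from $y \ge 0$ and $x_k \ge \|v\|$. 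Part 2 sufficiency is a direct specialization of Part 1.

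The main obstacle is the necessity. The strategy is to probe the Lemma \ref{ti} inequality with carefully chosen test vectors. Exploiting that $q > 1$, one picks $v$ with prescribed components parallel and perpendicular to $u$; testing $\xi = (\|v\| e, v)$ paired with appropriately matched $\eta$ on the boundary of $L^*$ produces constraints that, combined, force $a = 0$ whenever $u \ne 0$ (which settles Part 2 entirely). In the remaining case $u = 0$, test vectors $\eta = (t e^k, w)$ concentrated first on a single coordinate and then on pairs of coordinates, while sweeping $w$ over the ball of radius $t$, constrain the support of $a$ to exactly two indices and the corresponding entries to opposite signs; the unit-norm condition $\|a\| = 1$ then pins them to $\pm \sqrt 2/2$. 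This combinatorial case analysis of the possible support structure of $a$, including ruling out intermediate configurations, is the most delicate part of the argument.
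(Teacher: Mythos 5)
Your overall architecture matches the paper's: Part 1 via the factorization $P_K(x,u)=(x,P_Cu)$ and nonexpansiveness of $P_C$, Parts 2--3 via Theorem \ref{fooo}, Lemma \ref{leasy} and Lemma \ref{ti}, reducing everything to classifying $L$-isotone hyperplanes through the origin by probing the inequality with vectors on the boundaries of $L$ and $L^*$. Your sufficiency arguments are correct: for case (b) your reduction to $(y_i+y_j)(x_i+x_j-2\|v\|)\ge0$ is the paper's computation in slightly different packaging, and for case (a) you verify the Lemma \ref{ti} inequality directly (splitting $v,w$ along $u$ and $u^\perp$), whereas the paper simply observes that $a=0$ makes the halfspace a product set $\R^p\times(\text{halfspace})$ and invokes item 1 --- both work.

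The necessity sketch has one concrete logical gap. In the $u=0$ branch you claim that once the support of $a$ is confined to two indices $i\ne j$ with entries of opposite signs, the condition $\|a\|=1$ ``pins them to $\pm\sqrt2/2$.'' It does not: $a=(4/5)e^i-(3/5)e^j$ satisfies all of those constraints. You need the additional linear constraint $\lng a,e\rng=0$ (equivalently $a_i=-a_j$), which the paper extracts \emph{before} the case split on $u$, from the orthogonal pair $\zeta=(\|v\|e,v)\in L$, $\xi=(\|v\|x,-\lng e,x\rng v)\in L^*$ with $x=e$ and $\lng u,v\rng=0$; this test is available in your $u=0$ branch too (any $v\ne0$ works there), but you never state it, and without it the classification does not close. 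A second, smaller issue: for Part 2 you route the necessity through Theorem \ref{fooo}, which is stated only for closed convex sets with nonempty interior; a general nonempty closed convex $K\subset\R^{1}\times\R^{q}$ need not have one (e.g.\ $K=\R\times\{0\}$, which item 1 asserts \emph{is} $L$-isotone), and the paper avoids this by citing \cite{NemethNemeth2012a,NemethNemeth2012b} for that item rather than reproving it. The rest of your necessity plan --- the limiting argument forcing $a=0$ when $u\ne0$ (using $q>1$ to choose $v^n$ with $\lng u,v^n\rng\downarrow0$) and the pairwise sign constraints on the entries of $a$ from coordinate-concentrated test vectors --- is consistent with the paper's proof and would go through once $\lng a,e\rng=0$ is supplied.
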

%

\begin{proof}
	\begin{enumerate}
		\item Suppose that $K=\R^p\times C$, where $C$ is a closed convex set in $\R^q$. Let $(x,u),(y,v)\in\R^p\times\R^q$ such that 
			$(x,u)\le_L(y,v)$. Then, the nonexpansitivity \eqref{pronexp} of the projection implies 
			\[y-x\ge\|v-u\|e\ge\|P_Cv-P_Cu\|e.\] Thus, $(y,P_Cv)-(x,P_Cu)\in L$. Hence, 
			$P_K(x,u)=(x,P_Cu)\le_L(y,P_Cv)=P_K(y,v)$.
		\item The cone becomes a Lorentz cone of dimension at least $3$. This item was proved in \cite{NemethNemeth2012a,NemethNemeth2012b}.
		\item By Theorem \ref{fooo} and Lemma \ref{leasy}, we can suppose without loss of generality that $K$ is a hyperplane. Let $\gamma=(a,u)$ be the unit 
			normal vector of $K$. Suppose that one of the following conditions hold
			\begin{enumerate}
				\item The vector $a=0$.
				\item The vector $u=0$, and there exists $i\ne j$ such that $a_i=\sqrt2/2$, $a_j=-\sqrt2/2$ and $a_k=0$, for any 
					$k\notin\{i,j\}$.
			\end{enumerate}
			We need to show that $K$ is an $L$-isotone projection set. If (a) holds, then this follows easily from item 1.  Hence, suppose that (b) holds.
			By Lemma \ref{ti} we need to show that
			\begin{equation}\label{eau}
				\lng\zeta,\xi\rng\ge\lng\gamma,\zeta\rng\lng\gamma,\xi\rng,
			\end{equation} 
			for any $\zeta:=(x,v)\in L$ and $\xi:=(y,w)\in L^*$.
			Condition \eqref{eau} is equivalent to 
			\begin{equation*}
				\lng x,y\rng+\lng v,w\rng\ge\f12(x_i-x_j)(y_i-y_j),
			\end{equation*}
			or to
			\begin{equation}\label{eau2}
				\f12(x_i+x_j)(y_i+y_j)+\sum_{k\notin\{i,j\}}x_ky_k+\lng v,w\rng\ge0.
			\end{equation}
			Hence, it is enough to show \eqref{eau2}. By $(x,u)\in L$, $(y,w)\in L^*$ and the Cauchy-Schwarz inequality, we get
			\begin{gather*}
				\f12(x_i+x_j)(y_i+y_j)+\sum_{k\notin\{i,j\}}x_ky_k+\lng v,w\rng\ge\f12(\|v\|+\|v\|)(y_i+y_j)\\+\sum_{k\notin\{i,j\}}\|v\|y_k+\lng v,w\rng
				=\|v\|\lng y,e\rng+\lng v,w\rng\ge \|v\|\|w\|+\lng v,w\rng\ge0.
			\end{gather*}

			Conversely, suppose that $K$ is an $L$-isotone projection set.  
			By Lemma \ref{ti}, condition \eqref{eau} holds.
			Let $x\in\R^p_+$ and $v\in\R^q$. Then, by \eqref{extlor}, \eqref{extld} and Proposition
			\ref{pd}, it is easy to check that $\zeta:=(\|v\|e,v)\in L$, $\xi:=(\|v\|x,-\lng e,x\rng v)\in L^*$ and $\lng\zeta,\xi\rng=0$. Hence, 
			condition \eqref{eau} implies
			\begin{equation}\label{ea}
				0\ge(\lng a,e\rng\|v\|+\lng u,v\rng)(\lng a,x\rng\|v\|-\lng e,x\rng\lng u,v\rng).
			\end{equation} 
			If in \eqref{ea} $x=e$ and we choose $v\neq 0$ such that $\lng u,v\rng=0$, then we get $0\ge\lng a,e\rng\|v\|^2$, and hence 
			$\lng a,e\rng=0$. 
			Hence, \eqref{ea} becomes
			\begin{equation}\label{ea2}
				0\ge\lng u,v\rng(\lng a,x\rng\|v\|-\lng e,x\rng\lng u,v\rng).
			\end{equation}
			First, suppose that $u\ne0$. Let $v^n\in\R^q$ be a sequence of points such that $\|v^n\|=1$, $\lng u,v^n\rng>0$ and 
			$\lim_{n\to+\infty}\lng u,v^n\rng=0$. Let $n$ be an arbitrary positive integer.
			If in \eqref{ea2} we choose $\lambda>0$ sufficiently large such that $x:=a+\lambda e\ge0$ and $v=v^n$, 
			we get \(0\ge\lng u,v^n\rng(\|a\|^2-\lambda p\lng u,v^n\rng),\) or equivalently $\|a\|^2\le\lambda p\lng u,v^n\rng$. By letting 
			$n\to+\infty$ in the last inequality, we obtain $\|a\|^2\le0$, or equivalently $a=0$. 

			Next, suppose that $u=0$.
			Let $x,y\in\R^p_+$ and $w\in\R^q$ such that $\lng x,y\rng=0$, 
			$\lng y,e\rng\ge\|w\|$. Then, by \eqref{extlor}, \eqref{extld} and Proposition \ref{pd}, it is easy to check that 
			$\zeta:=(x,0)\in L$, $\xi:=(y,w)\in L^*$ and $\lng\zeta,\xi\rng=0$. Hence, equation \eqref{eau} implies     
			\begin{equation}\label{ea3}
				0\ge\lng a,x\rng\lng a,y\rng,
			\end{equation}
			for any $x,y\in\R^p_+$ with $\lng x,y\rng=0$. Let $x=e^r$ and $y=e^s$, where $r\ne s$. Then, \eqref{ea3} becomes $a_ra_s\le0$. 
			This together with $\lng e,a\rng=0$ and $1=\|\gamma\|^2=\|a\|^2$ gives that $\exists i\ne j$ such that $a_i=\sqrt{2}/2$, 
			$a_j=-\sqrt{2}/2$ and $a_k=0$, $\forall k\notin\{i,j\}$. 

		\end{enumerate}
\end{proof}
%

\section{Complementarity problems}

Recall the notion of a complementarity problem and the corresponding Picard iteration \eqref{rec} from the Introduction.
It is natural to seek convergence conditions for $x^n$. This will be done by finding cones $L$ and conditions to be imposed on $F$ such that the sequence 
$\{x^n\}_{n\in\N}$ to be $L$-increasing and $L$-bounded 
from above. These conditions will imply that $\{x^n\}_{n\in\N}$ is convergent and its limit is a solution of $NCP(F,K)$. Denote by $I$ the 
identity mapping.  

\begin{lemma}\label{lmoniso}
	Let $K\subset\R^m$ be a closed convex cone, $F:\R^m\to\R^m$ be a continuous mapping and $L$ be a pointed closed convex cone. Consider the
	sequence $\{x^n\}_{n\in\N}$ defined by \eqref{rec}. Suppose that the mappings $P_K$ and $I-F$ are $L$-isotone, $x^0\le_L x^1$, and there
	exists a $y\in\R^m$ such that $x^n\le_L y$, for all $n\in\N$ sufficiently large. Then, $\{x^n\}_{n\in\N}$ is convergent and its limit $x^*$ is a solution of 
	$NCP(F,K)$.
\end{lemma}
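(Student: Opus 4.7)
The plan is to show first that the sequence $\{x^n\}_{n\in\N}$ is $L$-increasing, then invoke the regularity of the pointed closed convex cone $L$ (guaranteed by the result of McArthur cited in the Preliminaries) to conclude convergence, and finally pass to the limit in the Picard recursion using continuity of $F$ and $P_K$ to identify the limit with a fixed point of $x\mapsto P_K(x-F(x))$, which is precisely the solvability characterization of $NCP(F,K)$ recalled in the Introduction.

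First I would argue by induction on $n$ that $x^n\le_L x^{n+1}$ for every $n\in\N$. The base case $x^0\le_L x^1$ is a hypothesis. For the inductive step, assuming $x^{n-1}\le_L x^n$, the $L$-isotonicity of $I-F$ gives
\[
(I-F)(x^{n-1})\le_L (I-F)(x^n),\qquad\text{i.e.,}\qquad x^{n-1}-F(x^{n-1})\le_L x^n-F(x^n),
\]
and then applying the $L$-isotone operator $P_K$ to both sides yields exactly $x^n\le_L x^{n+1}$.

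Second, combining the $L$-monotonicity just proved with the hypothesis that some $y\in\R^m$ dominates the tail ($x^n\le_L y$ for all $n$ sufficiently large), and using that every pointed closed convex cone in $\R^m$ is regular (as recalled at the end of the Preliminaries), I would conclude that the tail sequence, and hence $\{x^n\}_{n\in\N}$ itself, converges to some $x^*\in\R^m$.

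Finally, since $F$ is continuous and $P_K$ is nonexpansive (hence continuous) by \eqref{pronexp}, passing to the limit in the recursion \eqref{rec} gives $x^*=P_K(x^*-F(x^*))$. As recalled in the Introduction, this fixed-point identity is equivalent to $x^*\in K$, $F(x^*)\in K^*$ and $\lng x^*,F(x^*)\rng=0$, i.e.~$x^*$ solves $NCP(F,K)$. The only step where anything slightly subtle happens is the first one, since it is crucial that both $P_K$ and $I-F$ be $L$-isotone in order for the composition $x\mapsto P_K((I-F)(x))$ to propagate the $L$-order from $x^0\le_L x^1$ along the whole Picard orbit; everything afterwards is a direct application of regularity and continuity.
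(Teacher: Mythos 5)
Your proof is correct and follows essentially the same route as the paper: induction using the $L$-isotonicity of $P_K\circ(I-F)$ to get an $L$-increasing sequence, regularity of the pointed closed convex cone $L$ together with the upper $L$-bound to get convergence, and the continuity/fixed-point argument from the Introduction to identify the limit as a solution of $NCP(F,K)$. You merely spell out the "simple inductive argument" and the limiting step that the paper leaves implicit.
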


\begin{proof}
	Since the mappings $P_K$ and $I-F$ are $L$-isotone, the mapping $x\mapsto P_K\circ (I-F)$ is also $L$-isotone. Then, by using \eqref{rec} and a simple inductive 
	argument, it follows that $\{x^n\}_{n\in\N}$ is $L$-increasing. Since any pointed closed convex cone in $\R^m$ is regular, $\{x^n\}_{n\in\N}$ is convergent and 
	hence its limit $x^*$ is a solution of $NCP(F,K)$. 
\end{proof}

\begin{remark}\label{rc}
	$\,$
	
	\begin{enumerate}
		\item The condition $x^0\le_L x^1$ in Lemma \ref{lmoniso} is satisfied when $x^0\in K\cap F^{-1}(-L)$. Indeed, if $x^0\in K\cap F^{-1}(-L)$, then 
			$-F(x^0)\in L$ and $x^0\in K$. Thus $x^0\le_L x^0-F(x^0)$, and hence by the isotonicity of $P_K$ we obtain $x^0=P_K(x^0)\le_L P_K(x^0-F(x^0))=x^1$.
		\item The condition $x^0\le_L x^1$ in Lemma \ref{lmoniso} is satisfied when $x^0=0$ and $-F(0)\in L$. Indeed, this is a particular case of the previous 
			item. 
	\end{enumerate}
\end{remark}

\begin{proposition}\label{psumm}
	Let $L$ be a pointed closed convex cone, $K\subset\R^m$ be a closed convex cone such that $K\cap L\ne\varnothing$ 
	and $F:\R^m\to\R^m$ be a continuous mapping. Consider the sequence $\{x^n\}_{n\in\N}$ defined by \eqref{rec}. Suppose that the mappings $P_K$ and $I-F$ are $L$-isotone and 
	$x^0=0\le_L x^1$. Let $$\Omega=K\cap L\cap F^{-1}(L)=\{x\in K\cap L:F(x)\in L\}$$ and 
	$$\Gamma=\{x\in K\cap L:P_K(x-F(x))\le_L x\}.$$ 
	Consider the following assertions:
	\begin{enumerate}[(i)]
		\item\label{i} $\Omega\ne\varnothing$,
		\item\label{ii} $\Gamma\ne\varnothing$,
		\item\label{iii} The sequence $\{x^n\}_{n\in\N}$ is convergent and its limit $x^*$ is a solution of $NCP(F,K)$. Moreover, 
			$x^*$ is the $L$-least element of $\Gamma$ and a lower $L$-bound of $\Omega$.
	\end{enumerate}
	Then, $\Omega\subset\Gamma$ and (\ref{i})$\implies$(\ref{ii})$\implies$(\ref{iii}).
\end{proposition}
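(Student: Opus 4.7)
The plan is to establish $\Omega\subset\Gamma$ (which gives (\ref{i})$\Rightarrow$(\ref{ii}) at once) and then to deduce (\ref{iii}) from (\ref{ii}) by combining a short induction with Lemma \ref{lmoniso}. The inclusion $\Omega\subset\Gamma$ is immediate: for $x\in\Omega$ one has $F(x)\in L$, hence $x-F(x)\le_L x$; since $x\in K$ gives $P_K(x)=x$, the $L$-isotonicity of $P_K$ yields $P_K(x-F(x))\le_L P_K(x)=x$, so $x\in\Gamma$.

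For (\ref{ii})$\Rightarrow$(\ref{iii}), I would fix an arbitrary $y\in\Gamma$ and prove by induction on $n$ that $x^n\le_L y$. The base case $x^0=0\le_L y$ uses only $y\in L$. For the inductive step, the $L$-isotonicity of $I-F$ gives $x^n-F(x^n)\le_L y-F(y)$, the $L$-isotonicity of $P_K$ then yields $x^{n+1}\le_L P_K(y-F(y))$, and the defining property of $\Gamma$ supplies $P_K(y-F(y))\le_L y$; transitivity closes the step. Thus $y$ is an $L$-upper bound of $\{x^n\}_{n\in\N}$, and Lemma \ref{lmoniso} applies directly (its remaining hypotheses, $L$-isotonicity of $P_K$ and $I-F$ together with $x^0\le_L x^1$, are part of the present hypotheses) to produce a limit $x^*$ which solves $NCP(F,K)$.

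To finish, I would check that $x^*\in\Gamma$ and that it is the $L$-least element of $\Gamma$; the lower-$L$-bound assertion for $\Omega$ then follows from $\Omega\subset\Gamma$. Membership $x^*\in K$ follows from $x^n\in K$ for $n\ge 1$ and closedness of $K$; $x^*\in L$ follows from $0=x^0\le_L x^*$, a consequence of the $L$-increasing nature of $\{x^n\}$ established in the proof of Lemma \ref{lmoniso}; and $P_K(x^*-F(x^*))\le_L x^*$ is immediate from the fixed-point identity $x^*=P_K(x^*-F(x^*))$. Passing to the limit in $x^n\le_L y$ (legitimate since the order $\le_L$ is closed) then yields $x^*\le_L y$ for every $y\in\Gamma$, so $x^*$ is indeed the $L$-least element of $\Gamma$.

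No step is genuinely difficult; the argument is a disciplined application of the two $L$-isotonicity hypotheses and closedness of the order. The only point that requires a little care is separating \emph{``$x^*$ is a lower $L$-bound of $\Gamma$''} from \emph{``$x^*$ is the $L$-least element of $\Gamma$''}, which is what forces the separate verification that $x^*$ actually belongs to $\Gamma$.
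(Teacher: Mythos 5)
Your proof is correct and follows essentially the same route as the paper: the same direct verification of $\Omega\subset\Gamma$, the same induction showing $x^n\le_L y$ for any fixed $y\in\Gamma$, and the same appeal to Lemma \ref{lmoniso} followed by the observation that the fixed-point identity places $x^*$ in $\Gamma$. Your write-up is in fact slightly more explicit than the paper's on the final limiting step $x^*\le_L y$ and on why $x^*\in K\cap L$, but these are details the paper leaves implicit rather than a genuine difference in approach.
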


\begin{proof}
	 Let us first prove that $\Omega\subset\Gamma$. Indeed, let $y\in\Omega$.  Since $P_K$ is $L$-isotone, $y-F(y)\le_L y$ implies 
	 $P_K(y-F(y))\le_L P_K(y)=y$, which shows that $y\in\Gamma$. Hence, 
	 $\Omega\subset\Gamma$. Thus, (\ref{i})$\implies$(\ref{ii}) is trivial now. 
	\medskip

	\noindent 
	(\ref{ii})$\implies$(\ref{iii}):
	\medskip

	\noindent 
	Suppose that $\Gamma\ne\varnothing$. Since the mappings $P_K$ and $I-F$ are $L$-isotone, the mapping 
	 $P_K\circ (I-F)$ is also $L$-isotone. Similarly to the proof of Lemma  \ref{lmoniso}, it can be shown that $\{x^n\}_{n\in\N}$ is 
	 $L$-increasing. Let $y\in\Gamma$ 
	be arbitrary but fixed. We have $x^0=0\le_L y$. Now, suppose that $x^n\le_L y$. Since the mapping $P_K\circ (I-F)$ is $L$-isotone, 
	$x^n\le_L y$ implies that $x^{n+1}=P_K(x^n-F(x^n))\le_L P_K(y-F(y))\le_L y$. Thus, we have by induction that $x^n\le_L y$ for all
	$n\in\N$. Then, Lemma \ref{lmoniso} implies that $\{x^n\}_{n\in\N}$ is convergent and its limit $x^*\in K\cap L$ is a solution of 
	$NCP(F,K)$. Since $x^*$ is a solution of $NCP(F,K)$, we have that $P_K(x^*-F(x^*))=x^*$ and hence $x^*\in\Gamma$. 
	Therefore, $x^*$ is the $L$-least element of $\Gamma$. Since $\Omega\subset\Gamma$, $x^*$ is a lower $L$-bound of $\Omega$.
\end{proof}

We note that from the second item of Remark \ref{rc}, it follows that condition $x^0=0\le_L x^1$ of Proposition \ref{psumm} holds if $x^0=0$ and $-F(0)\in L$. We also
remark that since the definition of $\Omega$ does not contain the projection onto $K$, (for a given $F$ and $K$) it is easier to show that $\Gamma\ne\varnothing$ by first 
showing that $\Omega\ne\varnothing$. 

\section{Mixed complementarity problems}


The following lemma extends the mixed complementarity problem in \cite{FacchineiPang2003} by replacing $\R^q_+$ with an arbitrary nonempty closed convex cone in $\R^q$.

\begin{lemma}\label{lmicp}
	Let $K=\R^p\times C$, where $C$ is an arbitrary nonempty closed convex cone in $\R^q$. Let $G:\R^p\times\R^q\to\R^p$, 
	$H:\R^p\times\R^q\to\R^q$ and $F=(G,H):\R^p\times\R^q\to\R^p\times\R^q$. Then, the nonlinear complementarity problem $NCP(F,K)$ is equivalent to the mixed 
	complementarity problem $MiCP(G,H,C,p,q)$ defined by \[G(x,u)=0,\textrm{ }C\ni u\perp H(x,u)\in C^*.\]
\end{lemma}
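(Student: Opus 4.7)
The plan is to unfold the definition of $NCP(F,K)$ component by component for $K=\R^p\times C$ and verify that it produces exactly the three conditions of $MiCP(G,H,C,p,q)$. The key preliminary computation is the dual cone: I claim $K^*=\{0\}\times C^*$. Indeed, if $(y,v)\in K^*$, then $\lng x,y\rng+\lng u,v\rng\ge 0$ for all $x\in\R^p$ and $u\in C$. Taking $u=0$ and letting $x$ range over all of $\R^p$ forces $y=0$, after which taking $x=0$ reduces the condition to $\lng u,v\rng\ge 0$ for every $u\in C$, i.e.\ $v\in C^*$. The reverse inclusion is immediate.

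Next I would translate the three defining conditions of $NCP(F,K)$ for a point $(x^*,u^*)\in\R^p\times\R^q$. Membership $(x^*,u^*)\in K$ says nothing about $x^*$ and requires $u^*\in C$. Using $F=(G,H)$, the condition $F(x^*,u^*)\in K^*=\{0\}\times C^*$ becomes the conjunction $G(x^*,u^*)=0$ and $H(x^*,u^*)\in C^*$. Finally, using the definition of the scalar product on $\R^p\times\R^q$ together with the already established $G(x^*,u^*)=0$,
\begin{equation*}
\lng(x^*,u^*),F(x^*,u^*)\rng=\lng x^*,G(x^*,u^*)\rng+\lng u^*,H(x^*,u^*)\rng=\lng u^*,H(x^*,u^*)\rng,
\end{equation*}
so the orthogonality condition collapses to $\lng u^*,H(x^*,u^*)\rng=0$.

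Collecting these, $NCP(F,K)$ is equivalent to: $G(x^*,u^*)=0$, $u^*\in C$, $H(x^*,u^*)\in C^*$, and $\lng u^*,H(x^*,u^*)\rng=0$, which is precisely $MiCP(G,H,C,p,q)$. There is no real obstacle here; the only slightly non-routine point is computing $K^*$, and everything else is a bookkeeping exercise with the block structure of $F$ and the inner product on $\R^p\times\R^q$.
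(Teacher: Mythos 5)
Your proof is correct and follows the same route as the paper, which simply notes that $K^*=\{0\}\times C^*$ and leaves the remaining unfolding of the $NCP(F,K)$ conditions as an easy exercise. You have merely written out those routine details explicitly.
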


\begin{proof}
	It follows easily from the definition of the nonlinear complementarity problem \linebreak $NCP(F,K)$, by noting that 
	$K^*=\{0\}\times C^*$. 
\end{proof}

By using the notations of Lemma \ref{lmicp}, the Picard iteration \eqref{rec} can be rewritten as:
\begin{equation}\label{mpicard}
	\lf\{
	\begin{array}{rcl}
		x^{n+1}&=&x^n-G(x^n,u^n),\\
		u^{n+1}&=&P_C(u^n-H(x^n,u^n)),
	\end{array}
	\rg.
\end{equation}
where $G(x^n,u^n)=G(x^n,u^n)$ and $H(x^n,u^n)=H(x^n,u^n)$. Consider the partial order defined by the extended Lorentz cone defined by \eqref{extlor}. Then, we obtain the following theorem.
\begin{theorem}\label{tsumm}
	Let $K=\R^p\times C$, where $C$ is a closed convex cone, $K^*$ be the dual of $K$, $G:\R^p\times\R^q\to\R^p$ and $H:\R^p\times\R^q\to\R^q$ be continuous 
	mappings, $F=(G,H):\R^p\times\R^q\to\R^p\times\R^q$, and $L$ be the extended Lorentz cone defined by \eqref{extlor}. Let $x^0=0\in\R^p$, $u^0=0\in\R^q$ and 
	consider the sequence $\{(x^n,u^n)\}_{n\in\N}$ defined by \eqref{mpicard}. Let $x,y\in\R^p$ and $u,v\in\R^q$. Suppose that $y-x\ge\|v-u\|e$ implies 
	\[y-x-G(y,v)+G(x,u)\ge\|v-u-H(y,v)+H(x,u)\|e,\] and $x^1\ge\|u^1\|e$ (in particular this holds when $-G(0,0)\ge\|H(0,0)\|e$). 
	Let $$\Omega%
	=\{(x,u)\in\R^p\times C:x\ge\|u\|e,\textrm{ }G(x,u)\ge\|H(x,u)\|e\}$$ and $$\Gamma=\{(x,u)\in\R^p\times C:x\ge\|u\|e,\textrm{ }G(x,u)\ge\|u-P_C(u-H(x,u))\|e\}.$$ 
	Consider the following assertions:
	\begin{enumerate}[(i)]
		\item\label{i2} $\Omega\ne\varnothing$,
		\item\label{ii2} $\Gamma\ne\varnothing$,
		\item\label{iii2} The sequence $\{(x^n,u^n)\}_{n\in\N}$ is convergent and its limit $(x^*,u^*)$ is a solution of \linebreak 
			$MiCP(G,H,C,p,q)$. Moreover, $(x^*,u^*)$ is a lower $L$-bound of 
			$\Omega$ and the $L$-least element of $\Gamma$. 
	\end{enumerate}
	Then, $\Omega\subset\Gamma$ and (\ref{i2})$\implies$(\ref{ii2})$\implies$(\ref{iii2}).
\end{theorem}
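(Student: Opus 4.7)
The plan is to derive Theorem~\ref{tsumm} as a direct specialisation of Proposition~\ref{psumm} to the cone $K=\R^p\times C$ and the extended Lorentz cone $L$ of \eqref{extlor}, after identifying the hypotheses (isotonicity of $P_K$, isotonicity of $I-F$, the starting condition) and the sets $\Omega$ and $\Gamma$ with their counterparts in the abstract setting. The conclusion then converts from a solution of $NCP(F,K)$ to one of $MiCP(G,H,C,p,q)$ by Lemma~\ref{lmicp}.

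First I would check each hypothesis of Proposition~\ref{psumm}. The cone $L$ is pointed, closed and convex by Remark~\ref{r1}; $F$ is continuous by assumption; and $K\cap L$ is nonempty since it contains the origin (using $0\in C$ because $C$ is a cone). The mapping $P_K$ is $L$-isotone by item~1 of Theorem~\ref{tliso}. For the $L$-isotonicity of $I-F$, I would simply unpack the definition of $\le_L$ from \eqref{extlor}: the relation $(x,u)\le_L(y,v)$ is exactly $y-x\ge\|v-u\|e$, and $(I-F)(x,u)\le_L(I-F)(y,v)$ rewrites as
\[
y-x-G(y,v)+G(x,u)\ge\|v-u-H(y,v)+H(x,u)\|e,
\]
which is precisely the hypothesis on $G$ and $H$. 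The starting condition $x^0=0\le_L x^1$ translates into $x^1\ge\|u^1\|e$, which is also assumed.

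Next I would identify the two sets. Since $K\cap L=\{(x,u)\in\R^p\times C:x\ge\|u\|e\}$ and $F(x,u)\in L$ is equivalent to $G(x,u)\ge\|H(x,u)\|e$, the set $\Omega$ of Proposition~\ref{psumm} coincides with the one in the statement. Because $P_K(x-G(x,u),u-H(x,u))=(x-G(x,u),P_C(u-H(x,u)))$, the condition $P_K((x,u)-F(x,u))\le_L(x,u)$ unfolds as $G(x,u)\ge\|u-P_C(u-H(x,u))\|e$, matching $\Gamma$. Applying Proposition~\ref{psumm} then yields $\Omega\subset\Gamma$ and the chain (\ref{i2})$\implies$(\ref{ii2})$\implies$(\ref{iii2}), and Lemma~\ref{lmicp} converts the limit from a solution of $NCP(F,K)$ to a solution of $MiCP(G,H,C,p,q)$, preserving the lower-$L$-bound and $L$-least-element descriptions.

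For the parenthetical remark I would use that $P_C(0)=0$ (since $0\in C$) and the nonexpansivity \eqref{pronexp} of $P_C$, which gives $\|u^1\|=\|P_C(-H(0,0))\|\le\|H(0,0)\|$, so that $-G(0,0)\ge\|H(0,0)\|e$ implies $x^1=-G(0,0)\ge\|u^1\|e$ componentwise. I do not anticipate a serious obstacle; the only care required is the coordinate-block bookkeeping needed to match $\Omega$, $\Gamma$, and the $L$-isotonicity of $I-F$ with their abstract counterparts in Proposition~\ref{psumm}.
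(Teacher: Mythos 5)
Your proposal is correct and follows essentially the same route as the paper: verify the hypotheses of Proposition \ref{psumm} (with $P_K$ $L$-isotone by item 1 of Theorem \ref{tliso} and $I-F$ $L$-isotone by unpacking $\le_L$), identify $\Omega$ and $\Gamma$ with their abstract counterparts, and pass from $NCP(F,K)$ to $MiCP(G,H,C,p,q)$ via Lemma \ref{lmicp}. Your direct nonexpansivity argument for the parenthetical claim is just an unwinding of the paper's appeal to Remark \ref{rc}, so there is no substantive difference.
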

\begin{proof}
	First observe that $K\cap L\ne\varnothing$. By using the definition \eqref{extlor} of the extended Lorentz cone, it is easy to verify that 
	$$\Omega=K\cap L\cap F^{-1}(L)=\{z\in K\cap L:F(z)\in L\}$$ and $$\Gamma=\{z\in K\cap L:P_K(z-F(z))\le_L z\}.$$ Let $x,y\in\R^p$ and $u,v\in C$. Since 
	$y-x\ge\|v-u\|e$ implies\[y-x-G(y,v)+G(x,u)\ge\|v-u-H(y,v)+H(x,u)\|e,\] it follows that $I-F$ is $L$-isotone. Also, $x^1\ge\|u^1\|e$ means that
	$(x^0,u^0)=(0,0)\le_L (x^1,u^1)$ (in particular if $-G(0,0)\ge\|H(0,0)\|e$, or equivalently $-F(0,0)\in L$, then by the second item of Remark \ref{rc}, it follows 
	that $(x^0,u^0)=(0,0)\le_L (x^1,u^1)$). Hence, by Theorem \ref{tliso}, Proposition \ref{psumm} (with $m=p+q$) and Lemma \ref{lmicp}, it follows that $\Omega\subset\Gamma$ and 
	(\ref{i2})$\implies$(\ref{ii2})$\implies$(\ref{iii2}). 
\end{proof}

\section{An example}

Let $L$ be the extended Lorentz cone defined by \eqref{extlor}. On the conditions of  Theorem \ref{tsumm}, suppose that $C=\{(u_1,u_2)\in\R^2:u_2 \geq u_1, u_1 \geq 0 \}$ and 
$K=\mathbb{R}^2 \times C$. Let $f_1(x,u)=1/12(x_1+\|u\|+12)$ and $f_2(x,u)=1/12(x_2+\|u\|-7.2)$.  Then it is easy to show that these two functions are $L$-monotone. Let 
$w^1=(1,1,1/6,1/3)$ and $w^2=(1,1,1/3,1/6)$ so $w^1$ and $w^2$ is in 
$L$. For any two vectors $(x,u)$ and $(y,v)$ in $\R^2\times\R^2$ with $(x,u) \leq_L (y,v)$ we have $y_1-x_1 \geq \|v-u\| \geq \|u\|-\|v\|$. 
Hence,
\[f_1(y,v)-f_1(x,u)=\f1{12}(y_1-x_1-(\|u\|-\|v\|))\geq 0.\] Similarly we can prove that if $(x,u) \leq_L (y,v)$, then $f_2(y,v)-f_2(x,u) \geq 0$.
Since $L$ is 
convex, and $w^1,w^2 \in L$, if $(x,u) \leq_L (y,v)$ holds, then \[(f_1(y,v)-f_1(x,u))w^1+ (f_2(y,v)-f_2(x,u))w^2 \in L.\] Thus, 
$ f_1(x,u)w^1+ f_2(x,u)w^2 \leq_L f_1(y,v)w^1+ f_2(y,v)w^2$. Therefore, the mapping $f_1w^1+f_2w^2$ is $L$-isotone. Hence, choose the function
\[G(x,u)=\lf(\f{11}{12}x_1-\f1{12}x_2-\f16\|u\|-\f25,-\f1{12}x_1+\f{11}{12}x_2-\f16\|u\|-\f25\rg),\] 
\[H(x,u)=\lf(u_1-\f1{72}x_1-\f1{36}x_2-\f1{24}\|u\|+\f1{30},u_2-\f1{36}x_1-\f1{72}x_2-\f1{24}\|u\|-\f7{30}\rg),\] 
so that to have
\begin{equation}\label{fdef}
(x-G,u-H)=f_1w^1+f_2w^2=\left( f_1+f_2, f_1+f_2, \frac{1}{6}f_1+\frac{1}{3}f_2,  \frac{1}{3}f_1+\frac{1}{6}f_2 \right)
\end{equation}
$L$-isotone, where $G$, $H$, $f_1$ and $f_2$ are considered at the point $(x,u)$.
It is necessary to check that all the conditions in Theorem \ref{tsumm} are satisfied. First, since 
\[-G(0,0;0,0)=(f_1(0,0;0,0)+f_2(0,0;0,0),f_1(0,0; 0,0)+f_2(0,0; 0,0))=(0.4,0.4)\] and $\|H(0,0;0,0)\|=\sqrt{2}/6$, it is clear that $-G(0,0;0,0) \geq \|H(0,0;0,0)\|e$. Next, we
will show that $\Omega$ is not empty. Consider the vector $(\bar x,\bar u)=(31,31,3,4)\in K$. Obviously, $\bar x=(31,31) \geq \sqrt{3^2+4^2}e$, and since\\
\begin{equation*}
G(31,31,3,4)=(31,31)-(f_1+f_2,f_1+f_2)=(24.6,24.6)
\end{equation*}
and
\begin{equation*}
H(31,31,3,4)=(3,4)-\left( \frac{1}{6}f_1+\frac{1}{3}f_2,\frac{1}{3}f_1+\frac{1}{6}f_2 \right)=\left(\frac{23}{15},\frac{34}{15}\right),
\end{equation*}
where the functions $f_1$ and $f_2$ are considered at the point $(\bar x,\bar u)=(31,31,3,4)$, it is straightforward to check that $G(31,31,3,4)\geq\| H(31,31,3,4)\|e$. 
Thus, $(\bar x,\bar u)\in\Omega$, which shows that $\Omega\ne\varnothing$.

Now, we begin to solve the $MiCP(G,H,C,p,q)$. Suppose that $(x,u)$ is its solution. Since $G(x,u)=0$, and
\begin{equation*}
x-G(x,u)=(f_1+f_2, f_1+f_2),
\end{equation*}
where $f_i=f_i(x,u),i=1,2$, we have $x_1=x_2=f_1+f_2$.  Moreover, since \[x_1=\f1{12}(x_1+x_2)+\f16\|u\|+0.4,\] we get
\begin{equation}\label{s1}
x_1=x_2= \frac{1}{5}\|u\|+\frac{12}{25}.
\end{equation}
The perpendicularity $u\perp H(x,u)$ implies \[\langle u,H(x,u) \rangle=u_1(u_1-\frac{1}{6}f_1-\frac{1}{3}f_2)+u_2(u_2-\frac{1}{3}f_1-\frac{1}{6}f_2)=0.\] Thus,
\begin{equation}\label{s2}
u_1^2+u_2^2=\|u\|^2=f_1\left(\frac{1}{6}u_1+\frac{1}{3}u_2\right)+f_2 \left(\frac{1}{3}u_1+\frac{1}{6}u_2 \right).
\end{equation}
We will find all nonzero solutions on the boundary of $C$.
\medskip

\noindent
Case1: $u_1=u_2,u_1>0$. Then, $\|u\|= \sqrt{2}u_1= \sqrt{2}u_2$. 
Hence, from \eqref{s2}, we get
\begin{equation*}
	2u_1= \frac{1}{2}(f_1+f_2).
\end{equation*}
By \eqref{s1},  we can conclude
\begin{equation}\label{s211}
 u_1=u_2=\frac{120+6 \sqrt{2}}{995},
\end{equation}
which implies that 
\begin{equation}\label{sol1}
(x,u)=\left( \frac{480+24\sqrt{2}}{995},\frac{480+24\sqrt{2}}{995},\frac{120+6 \sqrt{2}}{995},\frac{120+6 \sqrt{2}}{995}\right).
\end{equation}
Case 2: $u_1=0$, i.e., $\|u\|=u_2$. Equation \eqref{s2} can be transformed into
\begin{equation}\label{s222}
	u_2 \left(u_2-  \frac{1}{3}f_1-\frac{1}{6}f_2\right)=0.
\end{equation}
By using \eqref{s2} again, we get $u_2=4/15$, so $u=(0,4/15)$ and 
 \begin{equation}\label{sol2}
(x,u)=\left(\frac{8}{15},\frac{8}{15},0,\frac{4}{15}\right).
\end{equation}
If  the Picard iteration shown in \eqref{mpicard} is applied and $(0,0,0,0)$ is the starting point, then we obtain
\begin{equation}\label{egpicard}
\left\{
\begin{array}{rcl}
x^{n+1} & = & x^n-G(x^n,u^n)=(f_1(x^n,u^n)+f_2(x^n,u^n))e, \\\\
u^{n+1} & = & P_C(u^n-H(x^n,u^n))\\
        & = & \ds P_C\left(\frac{1}{6}f_1(x^n,u^n)+\frac{1}{3}f_2(x^n,u^n),  \frac{1}{3}f_1(x^n,u^n)+\frac{1}{6}f_2(x^n,u^n) \right).
\end{array}\right.
\end{equation}
So, we have $x_1^{n+1}=x_2^{n+1}$. As we start from $(0,0,0,0)$, $x_1^j=x_2^j\geq 0$ for all $j \in \mathbb{N}$. Furthermore, define the set $S$ by
\begin{equation}\label{setS}
	S=\left\{ (x,u) \in \mathbb{R}^2 \times \mathbb{R}^2 : 0\leq x_1=x_2< \frac{8}{15},\textrm{ }u_1=0,\textrm{ }0\leq u_2<\frac{4}{15} \right\}.
\end{equation}
We will prove by induction that $(x^n,u^n) \in S$, for all $n\in\N$.
We have $(x^0,u^0)=(0,0,0,0) \in S$, and we need to show that as long as $(x^n,u^n) \in S$, $(x^{n+1},u^{n+1})$ defined by \eqref{egpicard} is in $S$. 
Indeed, by using the above analysis, $x_1^n=x_2^n$. By $u_1^n=0$, $\|u^n\|=u_2^n$. If $0\leq x_1^n=x_2^n< 8/15$ and $0\leq u_2^n<4/15$, we have
\begin{eqnarray*}
0<x_1^{n+1}= x_2^{n+1}=f_1(x^n,u^n)+f_2(x^n,u^n)=\frac{1}{6}(x_1^n+ u_2^n)+ \frac{2}{5} \\<\frac{1}{6}\lf(\frac{4}{15}+\frac{8}{15}\rg)+ 
\frac{2}{5}= \frac{8}{15}.
\end{eqnarray*}
On the other hand, it can be deduced that, \\
\begin{equation*}
u^n- H(x^n,u^n)= \left(\frac{1}{24}(x_1^n+u_2^n)-\frac{1}{30},\frac{1}{24}(x_1^n+u_2^n)+\frac{7}{30}\right).
\end{equation*}
Then, the first entry of $u^n-H(x^n,u^n)$ is smaller than $(1/24)(8/15+4/15)-1/30=0$ and the second entry is positive and  smaller than $(1/24)(8/15+4/15)+7/30=4/15$.
Thus, the projection of it to $C$ must be on the line $\{(u_1,u_2): u_1=0,\textrm{ }u_2 \geq 0\}$. Moreover, $u_2^{n+1}=(u^n-H(x^n,u^n))_2<\frac{4}{15}$.
Hence, by equation \eqref{egpicard},
\begin{equation*}
	u^{n+1}=(u_1^{n+1},u_2^{n+1})=P_C(u^n-H(x^n,u^n))=\left(0,\frac{1}{3}f_1(x^n,u^n)+\frac{1}{6}f_2(x^n,u^n)\right).
\end{equation*}
Therefore, equation \eqref{egpicard} can be transformed into
\begin{equation}\label{egpicard1}
\left\{
\begin{array}{l}
\ds x_1^{n+1}=x_2^{n+1}=\frac{1}{6}\lf(x_1^n+u_2^n+\frac{12}{5}\rg)\\\\
\ds u_2^{n+1}=\frac{1}{24}\lf(x_1^n+u_2^n+\frac{28}{5}\rg)
\end{array}\right.
\end{equation}
Observing that\\
\begin{equation}\label{iter}
	x_1^{n+1}=4u_2^{n+1}-\f8{15},
\end{equation} 
and by substituting \eqref{iter} (with $n+1$ replaced by $n$) into \eqref{egpicard1}$_1$, we get $u_2^{n+1}=(5/24)u_2^n+19/90$ and 
$x_1^{n+1}=(5/24)x_1^n+19/45$. Hence,
\begin{equation}\label{egpicard2}
\left\{
\begin{array}{rl}
\ds x_1^{n+1}-\frac{8}{15}=\frac{5}{24}\lf(x_1^n-\frac{8}{15}\rg)=\lf(\frac{5}{24}\rg)^n\lf(x_1^1-\frac{8}{15}\rg),\\\\
\ds u_2^{n+1}-\frac{4}{15}=\frac{5}{24}\lf(u_2^n-\frac{4}{15}\rg)=\lf(\frac{5}{24}\rg)^n\lf(u_2^1-\frac{4}{15}\rg).
\end{array}
\right.
\end{equation}
Therefore, when $n$ goes to infinity, the sequence $(x^n,u^n)$ converges to \linebreak $(8/15,8/15,0,4/15)$ which is a solution shown in Case 2.

\section*{Conclusions}
In this paper we extended the notion of Lorentz cones and showed that the projection onto a set given as the Cartesian product between an 
Euclidean space and any closed convex set $C$ in another Euclidean space is isotone with respect to the partial order defined by an
extended Lorentz cone $L$ (or shortly is an $L$-isotone projection set). When $C$ is a closed convex cone we used this property to show a Picard 
type iteration which is convergent to a solution of a general mixed complementarity problem, and we have given some examples. 
We also determined the family of all $L$-isotone projection sets, which contain the Cartesian products described above.

In the future we plan to extend the iterative idea of this paper for more general equilibrium problems. Our iterative idea may also work when 
$C$ is a general closed convex set which is not a closed convex cone, or more generally for any $L$-isotone projection set described in Theorem 
\ref{tliso}. This would lead to particular types of variational inequalities (and other related equilibrium problems) worth to be investigated.

A more ambitious plan would be to find all pairs of closed convex cones $(K,L)$ with $L$ pointed (or more generally pairs of closed convex sets 
$(K,L)$ with $L$ a pointed closed convex cone) in a Euclidean space such that $K$ is $L$-isotone. Although this plan seems utopistic any 
positive step in this direction would reveal fundamental connections between the geometric and order structure of the Euclidean space, and
could lead to interesting applications to complementarity problems (variational inequalities).

\section*{Appendix: How large is the family of $K$-isotone mappings?}

The remaining sections can be read without this one, which is entirely for the purpose of convincing the reader that the family of $K$-isotone 
mappings which occur in the condition ``$I-F$ is $K$-isotone'' of Proposition \ref{psumm} and the corresponding condition in Theorem \ref{tsumm}
is very wide.

Let $K,S\subset\R^m$ be pointed closed convex cones such that $K\subset S$. The function $f:\R^m\to\R$ is called \emph{$K$-monotone} if $x\le_K y$ implies $f(x)\le f(y)$. 
Both the $K$-monotone functions and the $K$-isotone mappings form a cone. If $f_1,\dots,f_\ell:\R^m\to\R$ are $K$-monotone and 
$w^1,\dots,w^\ell\in K$, then it is easy to see that the mapping $F:\R^m\to\R^m$  defined by 
\begin{equation}\label{elcomb}
	F(x)=f_1(x)w^1+\dots+f_\ell(x)w^\ell
\end{equation}
is $K$-isotone. It is obvious that any $S$-monotone function is also
$K$-monotone. Hence, if $f_1,\dots,f_\ell:\R^m\to\R$ are $S$-monotone, then the mapping $F$ defined by \eqref{elcomb} is $K$-isotone.  The pointed closed convex cone $S$ 
is called simplicial if there exists linearly independent vectors $u^1,\dots,u^m\in\R^m$ such that 
\begin{equation}\label{esimp}
	S=\cone\{u^1,\dots,u^m\}:=\{\lambda_1u^1+\dots+\lambda_mu^m:\lambda_1,\dots,\lambda_m\ge0\}.
\end{equation}
The vectors $u^1,\dots,u^m$ are called the \emph{generators} of $S$ and we say that $S$ is \emph{generated} by $u^1,\dots,u^m$.
It can be shown that the dual $S^*$ of a simplicial cone $S$ is simplicial. Moreover, if $U:=(u^1,\dots,u^m)$ (that is an $m\times m$ matrix with columns 
$u^1,\dots,u^m$) and $(U^\top)^{-1}=(v^1,\dots,v^m)$, then 
$S^*=\cone\{v^1,\dots,v^m\}$ \cite{BoydVandenberghe2004}. 
Let $\{e^1,e^2,\dots,e^m\}$ be the set of standard unit vectors in $\R^m$. The cone 
$\R^m_+=\{\lambda_1e^1+\dots+\lambda_me^m:\lambda_1,\dots,\lambda_m\ge0\}$ is called the nonnegative orthant. Let $S$ be the simplicial cone defined by \eqref{esimp}. If 
$f:\R^m\to\R$ is $\R^m_+$-monotone, then $\hat f:\R^m\to\R$ defined by $\hat f(x_1u^1+\dots+x_mu^m)=f(x_1e^1+\dots+x_me^m)$ is $S$-monotone. If 
$g_1,\dots,g_m:\R\to\R$ are monotone increasing, then obviously $g:\R^m\to\R$ defined by 
\begin{equation}\label{eg}
	g(x_1u^1+\dots+x_mu^m)=g_1(x_1)+\dots+g_m(x_m)
\end{equation}
is $S$-monotone. Moreover, if $f:\R^m\to\R$ is $S$-monotone and $\psi:\R\to\R$ is monotone increasing, then it is straightforward to see that $\psi\circ f$ is also
$S$-monotone. Hence, if all mappings $f_i$ in \eqref{elcomb} are formed by using a combination of \eqref{eg}, the previous property and the conicity of the $S$-monotone
functions, then the mapping $F$ defined by \eqref{elcomb} is $K$-isotone for any pointed closed convex cone $K$ contained in $S$. For any such 
cone $K$ it is easy to construct a 
simplicial cone $S$ which contains $K$. From the definition of the dual of a cone it follows that $\R^m=\{0\}^*=(K\cap(-K))^*=K^*+(-K)^*=K^*-K^*$. Thus, the smallest 
linear subspace of $\R^m$ containing $K^*$ is $\R^m$ and hence the interior of $K^*$ is nonempty (see \cite{Rockafellar1970}). Therefore, there exist $m$ linearly independent vectors in $K^*$, that 
is, $K^*$ contains a simplicial cone $T$. Let $S$ be the dual of $T$. Then, obviously $K\subset S$.

The above constructions show that for any pointed closed convex cone the family of $K$-isotone mappings, used in Proposition \ref{psumm} and 
Theorem \ref{tsumm} is very wide. Moreover, there may be many $K$-isotone mappings which are not of the above type. This topic is worth to be 
investigated in the future.

\section*{Acknowledgments}

The authors are grateful for the referee's comments which contributed to the quality of this paper.

\bibliographystyle{hieeetr}
\bibliography{bib}

\end{document}